\newtheorem{theo}{Theorem}
\newtheorem{lemma}{Lemma}[section]
\newtheorem{definition}[lemma]{Definition}
\newtheorem{claim}[lemma]{Claim}
\newcommand{\tr}{\mbox{tr}}
\newcommand{\ec}{\mathsf{E}_k(T)}
\title{A Note on Even Cycles and Quasi-Random Tournaments}
\author{
Subrahmanyam Kalyanasundaram\thanks{Department of Computer Science and Engineering,
    IIT Hyderabad, India. Email: {\tt subruk@iith.ac.in}. This
    work was done while being a student in School of Computer Science,
    Georgia Institute of Technology, Atlanta, GA 30332.}
\and Asaf Shapira\thanks{School of Mathematics, Tel-Aviv University, Tel-Aviv, Israel 69978, and Schools of Mathematics and Computer Science, Georgia Institute of Technology, Atlanta, GA 30332. Email: {\tt asafico@tau.ac.il}. Supported in part by NSF Grant DMS-0901355, ISF Grant 224/11 and a Marie-Curie CIG Grant 303320.}
}
\begin{document}

\maketitle

\begin{abstract}
A cycle $C=\{v_1,v_2,\ldots,v_1\}$ in a tournament $T$ is said to be even, if when walking along $C$,
an even number of edges point in the wrong direction, that is, they are directed from $v_{i+1}$ to $v_i$.
In this short paper, we show that for every fixed even integer $k \geq 4$, if close to half of the $k$-cycles
in a tournament $T$ are even, then $T$ must be quasi-random. This resolves an open question raised in 1991
by Chung and Graham \cite{qrtour}.
\end{abstract}


\section{Introduction} \label{sec:intro}

Quasi-random (or pseudo-random) objects are {\em deterministic} objects that possess
the properties we expect truly {\em random} ones to have.
One of the most surprising phenomena in this area is the fact that in many cases,
if an object satisfies a single {\em deterministic} property then it must ``behave'' like a typical random object in many useful
aspects. In this paper we will study one such phenomenon related to quasi-random tournaments.
The notion of quasi-randomness has been widely studied for different combinatorial objects, like
graphs, hypergraphs, groups and set systems \cite{qrsetsystem, qrhyper, qrgraphs, qrgroups,T1,T2}.
We refrain from giving a detailed discussion of this area in this short paper, and instead refer the reader to the surveys
of Gowers \cite{Go2} and Krivelevich and Sudakov \cite{pseudorandom}  for more details and references.

A directed graph $D =(V,E)$ consists of a set of vertices and a set of directed edges
$E \subseteq V \times V$.
We use the ordered pair $(u,v) \in V \times V$ to denote directed edge from $u$ to $v$.
A tournament $T=(V,E)$ is a directed graph such
that given any two distinct vertices $u,v \in V$, there exists
exactly one of the two directed edges $(u,v)$ or $(v,u)$ in $E(T)$. One can also think of a tournament
as an orientation of an underlying complete graph on $V$.
We shall use $n$ to denote $|V|$.

Consider a tournament $T= (V,E)$. For $Y \subseteq V$, and $v \in V$, let $d^+(v,Y)$ denote the number of directed edges going from $v$ to $Y$
and $d^-(v,Y)$ denote the number of directed edges going from $Y$ to $v$.
A purely random tournament is one where for each pair of distinct vertices $u$ and $v$ of $V$, the directed edge
between them is chosen randomly to be either $(u,v)$ or $(v,u)$ with probability $1/2$.
It is clear that in a random tournament $T$, we have $\sum_{v\in X} \left|d^+(v,Y) - d^-(v,Y)\right| = o(n^2)$
for all $X, Y \subseteq V(T)$. Let us define the corresponding property ${\cal Q}$ as follows:

\begin{definition}\label{def:qr}
A tournament $T$ on $n$ vertices satisfies property ${\cal Q}$ if
$$ \sum_{v\in X} \left|d^+(v,Y) - d^-(v,Y)\right| = o(n^2) \;\;\mbox{ for all } X, Y \subseteq V(T).$$
\end{definition}

The notion of quasi-randomness in tournaments was introduced by Chung and Graham \cite{qrtour}.
They defined several properties of tournaments, all of which are satisfied by purely
random tournaments, including the property ${\cal Q}$ above. They also showed that all
these properties are equivalent, namely, if a tournament satisfies one of these properties,
then it must also satisfy all the other. They then defined a tournament to be quasi-random
if it satisfies any (and therefore, all) of these properties.
For the sake of brevity, we will focus on property ${\cal Q}$ (defined above) which will turn out to be the easiest
one to work with in the context of the present paper.

Another property studied in \cite{qrtour} was related to even cycles in tournaments.
A $k$-cycle is an ordered sequence of vertices $(v_1, v_2, \ldots, v_k, v_1)$ such that
no vertex is repeated immediately in the sequence.
That is, $v_i \neq v_{i+1}$ for all $i \leq k-1$ and $v_k \neq v_1$.
We say that a $k$-cycle
(for an integer $k \geq 2$)
is even if as we traverse
the cycle, we see an even number of directed edges opposite to the direction of the traversal.
If a $k$-cycle is not even, we call it odd.
Let $\ec$ denote the number of even $k$-cycles in a tournament $T$.
Clearly, the number of $k$-cycles in an $n$-vertex tournament is
$n^k - o(n^k)$. In fact, it is not hard to see that
that the exact number is given by $(n-1)^k + (-1)^k(n-1)$ (see Section
\ref{sec:conclude}).
In a random tournament, we expect about half of the $k$-cycles to be even.
This motivated Chung and Graham \cite{qrtour} to define the following property.

\begin{definition}\label{def:pk}
A tournament $T$ on $n$ vertices satisfies\footnote{Observe that our definition of a $k$-cycle allows repeated vertices in the cycle.
Note however, that forbidding repeated vertices (that is, requiring the $k$-cycles to be simple) would have resulted
in the same property ${\cal P}(k)$ since the number of $k$-cycles with repeated vertices is $o(n^k)$.
Allowing repeated vertices simplifies some of the notation.
}
property ${\cal P}(k)$ if $\ec = (1/2 \pm o(1))n^k$.
\end{definition}

Notice that when $k$ is an odd integer, $\ec$ is {\em exactly} half the number of $k$-cycles in $T$,
since an even cycle becomes odd upon traversal in the reverse direction. Hence, property ${\cal P}(k)$
cannot be equivalent to property ${\cal Q}$ when $k$ is odd.

Chung and Graham \cite{qrtour} proved that
${\cal P}(4)$ is quasi-random. In other words, a tournament has (approximately) the correct number
of even $4$-cycles we expect to find in a random tournament, if and only if it satisfies property ${\cal Q}$.
A question left open in \cite{qrtour} was whether ${\cal P}(k)$ is equivalent to ${\cal Q}$ for
all even $k \geq 4$. Our main result answers this positively by proving the following.

\begin{theo}\label{thm:main}
The following holds for every fixed even integer $k \geq 4$: A tournament satisfies property ${\cal Q}$
if and only if it satisfies property ${\cal P}(k)$.
\end{theo}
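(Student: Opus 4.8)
\medskip
\noindent\textbf{Proof idea.}
The plan is to recast both properties in terms of the spectrum of the skew-symmetric sign matrix $B=B(T)$ defined by $B_{uv}=+1$ if $(u,v)\in E(T)$, $B_{uv}=-1$ if $(v,u)\in E(T)$, and $B_{uu}=0$. First I would record the identity $\ec-\mathsf{O}_k(T)=\tr(B^k)$, where $\mathsf{O}_k(T)$ denotes the number of odd $k$-cycles. Indeed, expanding $\tr(B^k)=\sum_{v_1,\dots,v_k\in V}B_{v_1v_2}B_{v_2v_3}\cdots B_{v_kv_1}$, every term containing an immediate repeat $v_i=v_{i+1}$ vanishes because $B$ has zero diagonal, while a term indexed by an honest $k$-cycle $(v_1,\dots,v_k,v_1)$ contributes $\prod_i B_{v_iv_{i+1}}=(-1)^{b}$, where $b$ is the number of backward edges along the traversal; this is $+1$ for even cycles and $-1$ for odd ones. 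Combined with the stated total $\ec+\mathsf{O}_k(T)=(n-1)^k+(-1)^k(n-1)=(1+o(1))n^k$ (valid since $k$ is fixed), this gives $\ec=\tfrac12\big((1+o(1))n^k+\tr(B^k)\big)$, so property ${\cal P}(k)$ is equivalent to $\tr(B^k)=o(n^k)$.

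Next I would analyze $\tr(B^k)$ through the eigenvalues of $B$. Being real and skew-symmetric, $B$ is a normal matrix whose eigenvalues are $0$ together with conjugate pairs $\pm i\mu_1,\dots,\pm i\mu_r$ with $\mu_j>0$; hence for even $k$ we have $\tr(B^k)=2(-1)^{k/2}\sum_{j=1}^r\mu_j^k$, so $|\tr(B^k)|=2\sum_j\mu_j^k$. Moreover $\tr(B^2)=\sum_{u,v}B_{uv}B_{vu}=-\sum_{u,v}B_{uv}^2=-n(n-1)$, so $\sum_j\mu_j^2=\binom n2$. Writing $\mu:=\mu_1=\max_j\mu_j$ — which is exactly the spectral norm of $B$ — and using $\mu_j^k=\mu_j^{k-2}\mu_j^2\le\mu^{k-2}\mu_j^2$ for the upper bound and the single term $\mu^k$ for the lower bound, I obtain, for every even $k\ge 4$,
\[
\mu^{k}\ \le\ \tfrac12\,|\tr(B^k)|\ \le\ \mu^{k-2}\binom n2 .
\]
The payoff is that ${\cal P}(k)$ now becomes, for every even $k\ge 4$, equivalent to the \emph{same} $k$-independent condition $\mu=o(n)$: the left inequality gives $\tr(B^k)=o(n^k)\Rightarrow\mu=o(n)$, and the right inequality — this is where $k-2\ge 2$, i.e.\ $k\ge 4$, is used, together with $\binom n2=\Theta(n^2)$ — gives $\mu=o(n)\Rightarrow\tr(B^k)=o(n^k)$.

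Finally, specializing the displayed pinch to $k=4$ shows ${\cal P}(4)\iff\mu=o(n)$, so combining this with the Chung--Graham theorem that ${\cal P}(4)\iff{\cal Q}$ yields the chain ${\cal Q}\iff\mu=o(n)\iff{\cal P}(k)$ for every even $k\ge 4$, which is Theorem~\ref{thm:main}. (If one prefers a self-contained argument, the equivalence $\mu=o(n)\iff{\cal Q}$ can be checked directly: for one direction, $\sum_{v\in X}|d^+(v,Y)-d^-(v,Y)|=\sum_{v\in X}\big|(B\mathbf 1_Y)_v\big|\le\sqrt{|X|}\,\|B\mathbf 1_Y\|_2\le n\mu=o(n^2)$; the converse is a standard discrepancy-to-eigenvalue rounding argument of Bilu--Linial type.) I do not expect a single genuinely hard step here — the content of the proof is the observation that $\tr(B^k)$ is pinched between $\mu^k$ and $\mu^{k-2}n^2$, so that ${\cal P}(k)$ is governed solely by the top eigenvalue modulus of $B$ regardless of $k$; the only points that require care are the bookkeeping in the trace identity (degenerate $k$-cycles with non-consecutive repeated vertices, which is precisely why the paper's convention allows them) and keeping the various $o(\cdot)$ estimates uniform while passing between ${\cal Q}$, ${\cal P}(4)$ and ${\cal P}(k)$.
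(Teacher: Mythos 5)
Your proposal is correct, and it shares the paper's skeleton --- the skew-symmetric $\pm1$ matrix, the identity ``${\cal P}(k)\iff\tr(B^k)=o(n^k)$'', and the fact that for even $k$ all the eigenvalue powers $\lambda^k$ have one sign --- but the way you organize the spectral step is genuinely different and arguably cleaner. The paper splits the theorem into two lemmas: for ${\cal P}(k)\Rightarrow{\cal Q}$ it extracts $|\lambda_1|=o(n)$ from the trace (your lower pinch) and then runs a direct discrepancy computation (essentially your parenthetical $\sum_{v\in X}|(B\mathbf 1_Y)_v|\le n\mu$ argument, in reverse); for ${\cal Q}\Rightarrow{\cal P}(k)$ it invokes Chung--Graham to get ${\cal P}(4)$ and then does an induction on $k$ via Cauchy--Schwarz, $|\tr(A^k)|\le\sqrt{\sum_i\lambda_i^4}\cdot|\sum_i\lambda_i^{k-2}|$. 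Your two-sided pinch $\mu^k\le\tfrac12|\tr(B^k)|\le\mu^{k-2}\binom n2$, powered by the exact computation $\sum_j\mu_j^2=\binom n2$ from $\tr(B^2)=-n(n-1)$, removes the induction entirely and shows in one stroke that every ${\cal P}(k)$ with even $k\ge4$ is equivalent to the single condition $\mu=o(n)$ --- a spectral characterization the paper only records as a concluding remark. The trade-off is that you use both directions of the Chung--Graham equivalence ${\cal P}(4)\iff{\cal Q}$ as a black box (with the harder direction ${\cal Q}\Rightarrow\mu=o(n)$ only gestured at via Bilu--Linial), whereas the paper reproves ${\cal P}(4)\Rightarrow{\cal Q}$ from scratch and imports only ${\cal Q}\Rightarrow{\cal P}(4)$; since Chung--Graham is a legitimately citable prior result, this does not create a gap, and all your estimates convert uniformly between the $\varepsilon$--$\delta$ formulations. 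One cosmetic remark: the upper pinch only needs $k-2\ge1$, not $k-2\ge2$; what matters is that $k=2$ fails (there $|\tr(B^2)|=n(n-1)$ identically), which your inequality correctly reflects.
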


As usual, when we say that property ${\cal Q}$ implies property ${\cal P}(k)$ we mean that for every $\varepsilon$ there is a $\delta=\delta(\varepsilon)$, such
that any large enough tournament satisfying $\sum_{v\in X} \left|d^+(v,Y) - d^-(v,Y)\right| \leq \delta n^2$ for all $X,Y$ has $(1/2 \pm \varepsilon)n^k$ even cycles.
The meaning of ${\cal P}(k)$ implies ${\cal Q}$ is defined similarly.

\section{Proof of Main Result}

To prove Theorem \ref{thm:main}, we shall go through a spectral characterization of quasi-randomness.
We use the following adjacency matrix $A$ to represent the tournament $T$. For every $u,v \in V$
\[ A_{u,v} = \left\{
\begin{array}{rl}
1 & \mbox{if } (u,v) \in E(T) \\
-1 & \mbox{if } (v,u) \in E(T) \\
0 & \mbox{if } u = v
\end{array}\right. \]

A key observation that we will use is that the matrix $A$ is skew-symmetric.
Recall that a real skew
symmetric matrix can be diagonalized and all its eigenvalues are purely imaginary.
It follows that all the eigenvalues of $A^2$ are non-positive. This implies the following
claim, which will be crucial in our proof.

\begin{claim}\label{claim:eigen}
For $k \equiv 2 \pmod 4$, all the eigenvalues of $A^k$ are non-positive.
For $k \equiv 0 \pmod 4$, all the eigenvalues of $A^k$ are non-negative.
\end{claim}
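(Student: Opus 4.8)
The plan is to diagonalize $A$ and simply track the eigenvalues under the $k$-th power. As already noted in the excerpt, $A$ is real and skew-symmetric, hence diagonalizable over $\mathbb{C}$ with purely imaginary eigenvalues, say $i\lambda_1,\dots,i\lambda_n$ with each $\lambda_j\in\mathbb{R}$ (indeed $-A^2=A^{\top}A$ is a Gram matrix, hence positive semidefinite, which is exactly the stated fact that the eigenvalues of $A^2$ are non-positive). Writing $A=S\,\mathrm{diag}(i\lambda_1,\dots,i\lambda_n)\,S^{-1}$ for an invertible $S$, I get $A^k=S\,\mathrm{diag}\big((i\lambda_1)^k,\dots,(i\lambda_n)^k\big)\,S^{-1}$, so the eigenvalues of $A^k$ are precisely the numbers $(i\lambda_j)^k=i^k\lambda_j^k$.

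From here the claim is immediate by splitting on $k\bmod 4$. Since $k$ is even, $\lambda_j^k\ge 0$ for every $j$. If $k\equiv 0\pmod 4$ then $i^k=1$, so each eigenvalue of $A^k$ equals $\lambda_j^k\ge 0$; if $k\equiv 2\pmod 4$ then $i^k=-1$, so each eigenvalue equals $-\lambda_j^k\le 0$. This is exactly the two assertions of the claim.

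If one prefers to avoid complex numbers entirely, an equivalent route is to set $B:=A^2$, which is real symmetric and negative semidefinite, and to write $k=2m$. When $k\equiv 0\pmod 4$, $m$ is even and $A^k=B^m=(B^{m/2})^2$ is a square of a symmetric matrix, hence positive semidefinite. When $k\equiv 2\pmod 4$, $m$ is odd and $A^k=B^m=B\cdot(B^{(m-1)/2})^2$; since $B\preceq 0$ and $(B^{(m-1)/2})^2\succeq 0$ are commuting symmetric matrices, they are simultaneously diagonalizable, and the eigenvalues of the product are products of a non-positive and a non-negative real, hence non-positive. There is no real obstacle in either version; the only points needing (trivial) care are the bookkeeping that $i^k=+1$ exactly when $4\mid k$ while $i^k=-1$ when $k\equiv 2\pmod 4$, and the observation that evenness of $k$ forces $\lambda_j^k\ge 0$ even when $\lambda_j<0$.
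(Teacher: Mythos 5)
Your argument is correct and is essentially the proof the paper intends: the paper derives the claim directly from the observation that $A$ is skew-symmetric, hence diagonalizable with purely imaginary eigenvalues, so the eigenvalues of $A^k$ are $i^k\lambda_j^k$ and the sign is determined by $k \bmod 4$. Your alternative real-symmetric route via $B=A^2$ is a fine (equally short) variant, but the main argument matches the paper's.
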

%

For a matrix $M$, we let $\tr(M) = \sum_{i=1}^n M_{i,i}$ denote the trace of the matrix $M$.
Before we prove Lemmas \ref{lem:dir1} and \ref{lem:dir2}, we make the following claim.
\begin{claim}\label{claim:walk}
Let $A$ be the adjacency matrix of the tournament $T$. Then for an even integer $k \geq 4$, we have
$$ \tr(A^k) = 2 \ec - (n-1)^k - (n-1).$$
In particular, $T$ satisfies the property ${\cal P}(k)$ if and only if $|\tr(A^k)| = o(n^k)$.
\end{claim}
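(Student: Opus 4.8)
The plan is to compute $\tr(A^k) = \sum_{i} (A^k)_{i,i}$ by expanding the matrix product, so that
$$\tr(A^k) = \sum_{v_1,v_2,\ldots,v_k} A_{v_1,v_2}A_{v_2,v_3}\cdots A_{v_k,v_1},$$
where the sum ranges over all ordered $k$-tuples of vertices (with repetitions allowed). Each summand is a product of entries of $A$. First I would observe that a summand vanishes unless every consecutive pair $v_i,v_{i+1}$ (indices mod $k$) is distinct, i.e. unless $(v_1,\ldots,v_k,v_1)$ is a $k$-cycle in the sense of the paper; this is exactly why the paper's definition allows repeated vertices. For such a cycle, the product $A_{v_1,v_2}\cdots A_{v_k,v_1}$ equals $+1$ if the number of ``wrong-direction'' edges is even and $-1$ if it is odd, i.e. it is $+1$ on even $k$-cycles and $-1$ on odd $k$-cycles.

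Next I would split the count of $k$-cycles. Let $N_k$ denote the total number of $k$-cycles; the excerpt already records $N_k = (n-1)^k + (-1)^k(n-1) = (n-1)^k + (n-1)$ since $k$ is even. Writing $\mathsf{O}_k(T)$ for the number of odd $k$-cycles, we have $\ec + \mathsf{O}_k(T) = N_k$, and the trace computation above gives
$$\tr(A^k) = \ec - \mathsf{O}_k(T) = \ec - (N_k - \ec) = 2\ec - N_k = 2\ec - (n-1)^k - (n-1),$$
which is the claimed identity. For the ``in particular'' part, I would note that $(n-1)^k + (n-1) = n^k - o(n^k)$, so $\ec = (1/2 \pm o(1))n^k$ holds if and only if $2\ec - (n-1)^k - (n-1) = o(n^k)$, i.e. if and only if $|\tr(A^k)| = o(n^k)$; this is property ${\cal P}(k)$ by Definition \ref{def:pk}.

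The only genuinely delicate point is the step identifying the sign of $A_{v_1,v_2}\cdots A_{v_k,v_1}$ with the parity of the closed walk $(v_1,\ldots,v_k,v_1)$. Here one must be careful that the ``wrong-direction'' count is taken relative to the traversal $v_1\to v_2\to\cdots\to v_k\to v_1$: the factor $A_{v_i,v_{i+1}}$ is $+1$ exactly when the edge between $v_i$ and $v_{i+1}$ is oriented in agreement with the traversal, and $-1$ otherwise, so the product is $(-1)$ raised to the number of disagreements, which is precisely the parity defining even versus odd cycles. Everything else is bookkeeping: the vanishing of degenerate terms, and the substitution of the known value of $N_k$. I do not expect any real obstacle beyond stating this sign correspondence cleanly.
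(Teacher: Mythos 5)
Your proposal is correct and follows essentially the same route as the paper: both expand $\tr(A^k)$ as a signed sum over closed walks of length $k$, identify the nonzero terms with $k$-cycles whose sign is $(-1)^{\#\text{wrong-direction edges}}$, and then substitute the total count $(n-1)^k+(n-1)$ of $k$-cycles. Your write-up is just a more explicit version of the paper's argument, with the sign correspondence and the vanishing of degenerate terms spelled out.
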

\begin{proof}
Notice that the $(u,u)$-th entry of $A^k$ is the number of even $k$-cycles starting and ending at $u$
minus the number of odd $k$-cycles starting and ending at $u$.
So the sum of all diagonal entries, $\tr(A^k)$, is the difference between all labeled even $k$-cycles and all labeled odd $k$-cycles.
Recall that the total number of $k$-cycles is $(n-1)^k + (n-1)$ for even $k$.
Thus we have that $\tr(A^k) = 2 \ec - (n-1)^k - (n-1)$.

We have $\tr(A^k) = 2 \ec - n^k + o(n^k)$. Notice that $T$ satisfies
property ${\cal P}(k)$ when $\ec = (1/2 \pm o(1)) n^k$, which happens if and only if
$|\tr(A^k)| = o(n^k)$.
\end{proof}
%

We are now ready to prove the first direction of Theorem \ref{thm:main}.

\begin{lemma}\label{lem:dir1}
Let $k \geq 4$ be an even integer. If a tournament satisfies
${\cal P}(k)$ then it satisfies ${\cal Q}$.
\end{lemma}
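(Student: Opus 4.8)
The plan is to use the spectral reformulation provided by Claim \ref{claim:walk} together with the sign information in Claim \ref{claim:eigen}. Assume $T$ satisfies ${\cal P}(k)$, so by Claim \ref{claim:walk} we have $|\tr(A^k)| = o(n^k)$. Since $A$ is real skew-symmetric, its eigenvalues are $i\lambda_1, \ldots, i\lambda_n$ with $\lambda_j \in \mathbb{R}$, and the eigenvalues of $A^k$ are $(i\lambda_j)^k = \pm \lambda_j^k$, where by Claim \ref{claim:eigen} the sign is uniform across all $j$ (it is $-1$ if $k \equiv 2 \pmod 4$ and $+1$ if $k \equiv 0 \pmod 4$). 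Hence $\tr(A^k) = \pm \sum_j \lambda_j^k$ where every summand has the same sign, so $|\tr(A^k)| = \sum_j \lambda_j^k = \sum_j |\lambda_j|^k$. The point of this step is that there is no cancellation: $\sum_j |\lambda_j|^k = o(n^k)$.

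Next I would pass from the $k$-th power sum back to control of the largest eigenvalue in absolute value. Let $\rho = \max_j |\lambda_j|$, i.e. the spectral radius of $A$. From $\rho^k \le \sum_j |\lambda_j|^k = o(n^k)$ we immediately get $\rho = o(n)$ (using that $k$ is a fixed constant, so $o(n^k)^{1/k} = o(n)$). Thus every eigenvalue of the skew-symmetric matrix $A$ has absolute value $o(n)$, equivalently $\|A\|_{\mathrm{op}} = o(n)$.

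Finally I would translate the spectral bound $\|A\|_{\mathrm{op}} = o(n)$ into property ${\cal Q}$. For $X, Y \subseteq V$, note that for each $v \in X$ we have $d^+(v,Y) - d^-(v,Y) = \sum_{u \in Y} A_{v,u} = (A \mathbf{1}_Y)_v$, where $\mathbf{1}_Y$ is the indicator vector of $Y$. Choosing signs $\sigma_v = \mathrm{sign}\big((A\mathbf{1}_Y)_v\big)$ for $v \in X$ and letting $z$ be the vector with entries $\sigma_v$ on $X$ and $0$ elsewhere, we get
$$\sum_{v \in X} \left| d^+(v,Y) - d^-(v,Y) \right| = z^\top A \mathbf{1}_Y \le \|z\|_2 \,\|A\|_{\mathrm{op}}\, \|\mathbf{1}_Y\|_2 \le \sqrt{n} \cdot o(n) \cdot \sqrt{n} = o(n^2),$$
which is exactly property ${\cal Q}$. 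Chasing the quantifiers: given $\varepsilon$, one picks $\delta$ so that $|\tr(A^k)| \le \delta n^k$ forces $\rho \le \delta^{1/k} n$, hence the final bound is at most $\delta^{1/k} n^2 \le \varepsilon n^2$ for $\delta$ small enough.

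I expect the only genuinely delicate point to be the no-cancellation step, which is where Claim \ref{claim:eigen} (and hence the skew-symmetry of $A$ and the restriction to even $k$) is essential: for a general matrix $\tr(A^k)$ being small would not bound the spectral radius, but here the uniform sign of the eigenvalues of $A^k$ converts $|\tr(A^k)|$ into the $\ell_k$-norm of the eigenvalues, and everything else is a routine Rayleigh-quotient / Cauchy–Schwarz estimate.
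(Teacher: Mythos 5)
Your proposal is correct and follows essentially the same route as the paper: both arguments use Claim \ref{claim:walk} together with the uniform sign of the eigenvalues of $A^k$ (Claim \ref{claim:eigen}) to rule out cancellation in $\tr(A^k)$ and deduce that the spectral radius of $A$ is $o(n)$, and then convert that spectral bound into property ${\cal Q}$ via a signed indicator vector and a Cauchy--Schwarz/operator-norm estimate. The only cosmetic differences are that you argue the second step directly through $\|A\|_{\mathrm{op}}$ while the paper phrases it as a contradiction with an explicit eigenvector decomposition, and that you parametrize the purely imaginary eigenvalues as $i\lambda_j$ with $\lambda_j$ real.
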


\begin{proof}
Let $ \lambda_1(A), \ldots, \lambda_n(A)$ be the eigenvalues of $A$ sorted by their
absolute value, so that $\lambda_1(A)$ has the largest absolute value.
We first claim that $|\lambda_1(A)| = o(n)$. Assume first that
$k \equiv 0 \pmod 4$. Then by Claim \ref{claim:eigen} all the eigenvalues of $A^k$ are non-negative, implying that
\begin{equation}\label{eq1}
\tr(A^k) = \sum^n_{i=1}\lambda_i(A^k) \geq \lambda_1(A^k) = \lambda_1(A)^k \;.
\end{equation}
Now, since we assume that $T$ satisfies ${\cal P}(k)$, we get from Claim \ref{claim:walk} that
$|\tr(A^k)| = o(n^k)$. Equation (\ref{eq1}) now implies that $|\lambda_1(A)| = o(n)$. If
$k \equiv 2 \pmod 4$, then since Claim \ref{claim:eigen} tells us that all eigenvalues are non-positive, we
have
\begin{equation}\label{eq3}
\tr(A^k) = \sum^n_{i=1}\lambda_i(A^k) \leq \lambda_1(A^k) = \lambda_1(A)^k \;.
\end{equation}
As in (\ref{eq1}), the fact that $|\tr(A^k)| = o(n^k)$ and that all the terms in (\ref{eq3}) are non-positive, implies
that $|\lambda_1(A)| = o(n)$.

%

We now claim that the fact that $|\lambda_1(A)| = o(n)$ implies that $T$ satisfies ${\cal Q}$.
Suppose it does not, and let $X, Y \subseteq V$ be two sets
satisfying $\sum_{v\in X} |d^+(v,Y) - d^-(v,Y)| = cn^2$, for some $c > 0$.
Let $\mathbf y \in \{0,1\}^n$ be the indicator vector for $Y$.
We pick the vector $\mathbf x$ in the following way: if $v \not \in X$, then set the corresponding coordinate
$\mathbf x_v = 0$. For $v \in X$ such that $d^+(v,Y) - d^-(v,Y) \geq 0$, we set $\mathbf x_v = 1$.
For all other $v\in X$, we set $\mathbf x_v = -1$.
Now notice that for these vectors $\mathbf x$ and $\mathbf y$, we have $\mathbf x^T A \mathbf y = \sum_{v\in X} |d^+(v,Y) - d^-(v,Y)| = cn^2$.
We can normalize $\mathbf x$ and $\mathbf y$ to get unit vectors $\tilde {\mathbf x} = \mathbf x/\sqrt{|X|}$
and $\tilde {\mathbf y} = \mathbf y/\sqrt{|Y|}$ satisfying
\begin{equation}\label{eq4}
\tilde {\mathbf x}^T A \tilde {\mathbf y} = (\mathbf x^T A \mathbf y)/ \sqrt{|X||Y|} \geq cn^2/n = cn\;,
\end{equation}
where the inequality follows since $|X|, |Y| \leq n$.
We have thus found two unit vectors $\tilde {\mathbf x}, \tilde {\mathbf y}$ such that $\tilde {\mathbf x}^T A \tilde {\mathbf y} \geq cn$.

We finish the proof by showing that (\ref{eq4}) contradicts the fact that $|\lambda_1(A)| = o(n)$.
Let $\mathbf v_1, \ldots, \mathbf v_n$ be the orthonormal eigenvectors corresponding to the eigenvalues of $A$.
Let $\tilde {\mathbf x} = \sum_i \alpha_i \mathbf v_i$ and $\tilde {\mathbf y} = \sum_i \beta_i \mathbf v_i$ be
the decomposition of $\tilde {\mathbf x}$ and $\tilde {\mathbf y}$ along the eigenvectors (note that $\alpha_i$
and $\beta_i$ might be complex numbers). We have
\begin{equation}\label{eq2}
\tilde {\mathbf x}^T A \tilde {\mathbf y}  =  \left|\sum_i \alpha_i \lambda_i(A) \beta_i\right|
                                           \leq  \sqrt{ \sum_i |\overline{\alpha_i}|^2  \cdot \sum_i |{\lambda_i(A) \beta_i}|^2}
                                           =  \sqrt{\sum_i |\lambda_i(A)|^2 |\beta_i|^2}
                                           \leq |\lambda_1(A)|
\end{equation}
where the first inequality follows by using Cauchy-Schwarz ($\overline \alpha$ denotes the complex conjugate of $\alpha$).
We then use the fact that $\sum_i |\alpha_i|^2 =\sum_i |\beta_i|^2=1$ which follow from the fact that $\tilde {\mathbf x},\tilde {\mathbf y}$
are unit vectors. Finally, since we have that $|\lambda_1(A)| = o(n)$ and that $\tilde {\mathbf x}^T A \tilde {\mathbf y} \geq cn$ equation (\ref{eq2}) gives a contradiction.
So $T$ must satisfy ${\cal Q}$.
\end{proof}

We now turn to prove the second direction of Theorem \ref{thm:main}.

\begin{lemma}\label{lem:dir2}
Let $k \geq 4$ be an even integer. If a tournament satisfies ${\cal Q}$
then it satisfies ${\cal P}(k)$.
\end{lemma}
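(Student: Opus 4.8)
The plan is to use the trace formula from Claim~\ref{claim:walk}, namely $\tr(A^k)=2\ec-(n-1)^k-(n-1)$, so that it suffices to prove $|\tr(A^k)|=o(n^k)$ whenever $T$ satisfies ${\cal Q}$. Since $\tr(A^k)=\sum_i \lambda_i(A)^k$, and since all $\lambda_i(A)$ are purely imaginary so that $|\lambda_i(A)^k|=|\lambda_i(A)|^k$, we have $|\tr(A^k)|\le \sum_i |\lambda_i(A)|^k\le |\lambda_1(A)|^{k-2}\sum_i|\lambda_i(A)|^2 = |\lambda_1(A)|^{k-2}\cdot \tr(-A^2)$. Now $\tr(A^2)=-\tr(AA^T)=-\sum_{u,v}A_{u,v}^2=-n(n-1)$, so $\tr(-A^2)=n(n-1)\le n^2$. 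Hence $|\tr(A^k)|\le |\lambda_1(A)|^{k-2}n^2$, and it will be enough to show that property ${\cal Q}$ forces $|\lambda_1(A)|=o(n)$; then since $k\ge 4$ we get $|\tr(A^k)|=o(n^{k-2})\cdot n^2=o(n^k)$, as desired.

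So the whole second direction reduces to the single estimate: ${\cal Q}$ implies $|\lambda_1(A)|=o(n)$. First I would note that $A$ is real skew-symmetric, so $iA$ is Hermitian and $\lambda_1(iA)$ is real with $|\lambda_1(iA)|=|\lambda_1(A)|$; thus $|\lambda_1(A)|$ equals the operator norm $\|A\|_{op}=\max_{\|\x\|=\|\y\|=1}|\x^T A\y|$ (using real test vectors suffices here by the min-max characterization, since the extreme singular value of a real matrix is attained on real vectors). Now suppose for contradiction that $|\lambda_1(A)|\ge cn$ for some fixed $c>0$ along a sequence of tournaments. Then there are real unit vectors $\x,\y$ with $|\x^T A\y|\ge cn$. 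The hard part — and the main obstacle — is to convert such analytic (real-valued, possibly with many distinct coordinate values) witnesses into the combinatorial witness sets $X,Y\subseteq V$ that property ${\cal Q}$ talks about. This is a standard ``rounding'' argument, but it has to be carried out carefully.

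For the rounding, I would first reduce to $\pm 1$-valued (or $\{0,\pm1\}$-valued) vectors. Writing $x^+ = \max(x,0)$ and $x^- = \max(-x,0)$ coordinatewise, and similarly for $\y$, bilinearity gives $\x^T A\y$ as a signed sum of four terms of the form $\u^T A\v$ with $\u,\v\ge 0$; hence some such term has absolute value $\ge cn/4$ with $\u,\v$ supported on parts of $\x,\y$ and still of norm $\le 1$. So WLOG $\x,\y\ge 0$ with $\x^T A\y\ge cn/4$. Next, I would discretize the nonnegative entries: since $\|\x\|_2\le 1$, at most $1/t^2$ coordinates of $\x$ exceed $t$, so truncating entries above some threshold and rounding the rest to powers of $2$ in $[cn^{-1}/100, 1]$ (the small entries contribute a total at most $O(c n / 100)$ to the form because $\|A\|_{\max}=1$ and there are at most $n$ of them, giving at most $n\cdot\|\y\|_1\cdot (c/(100n))\le c n/100$ after a Cauchy--Schwarz bound on $\|\y\|_1\le\sqrt n\le n$ — I will need to be a little more careful with the exact bookkeeping) leaves $O(\log n)$ distinct dyadic levels. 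Writing each rounded vector as a nonnegative combination $\sum_j 2^{-j}\mathbf 1_{S_j}$ over these $O(\log n)$ level sets, bilinearity again splits the form into $O(\log^2 n)$ terms $2^{-j-\ell}\mathbf 1_{S_j}^T A\,\mathbf 1_{T_\ell}$, so one of them is at least $cn/O(\log^2 n)$ in absolute value. Finally $2^{-j-\ell}\le 1$ and $|S_j|,|T_\ell|\le n$, and $\mathbf 1_{S_j}^T A\,\mathbf 1_{T_\ell} = \sum_{v\in S_j}(d^+(v,T_\ell)-d^-(v,T_\ell))$, whose absolute value is at most $\sum_{v\in S_j}|d^+(v,T_\ell)-d^-(v,T_\ell)|$; so taking $X=S_j$, $Y=T_\ell$ we obtain $\sum_{v\in X}|d^+(v,Y)-d^-(v,Y)|\ge cn/O(\log^2 n) = \omega(\delta n^2)$ is false — rather, this shows that if ${\cal Q}$ holds with error $\delta n^2$ then $cn \le O(\delta n^2 \log^2 n)$, forcing $c\to 0$ as $\delta\to 0$ (for $n$ large). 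This contradiction establishes $|\lambda_1(A)|=o(n)$ and completes the proof. (One could alternatively quote the known equivalence between ${\cal Q}$ and the eigenvalue bound from Chung--Graham, but giving the self-contained rounding argument keeps the note self-contained.)
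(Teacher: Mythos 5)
Your reduction of the lemma to the single estimate ``${\cal Q}$ implies $|\lambda_1(A)|=o(n)$'' is sound, and the bound $|\tr(A^k)|\le|\lambda_1(A)|^{k-2}\sum_i|\lambda_i(A)|^2=|\lambda_1(A)|^{k-2}\,n(n-1)$ is correct and would indeed finish the proof. The gap is in the rounding argument you give for that single estimate. From $|\x^TA\y|\ge cn$ with unit vectors, your dyadic decomposition produces level sets $S_j,T_\ell$ with $2^{-j-\ell}\bigl|\mathbf 1_{S_j}^TA\,\mathbf 1_{T_\ell}\bigr|\ge cn/O(\log^2n)$, and since $2^{-j-\ell}\le 1$ you conclude $\sum_{v\in S_j}|d^+(v,T_\ell)-d^-(v,T_\ell)|\ge cn/O(\log^2 n)$. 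But this quantity is itself $o(n^2)$ --- it is only of order $n/\log^2 n$ --- so it is perfectly consistent with property ${\cal Q}$, which only forbids discrepancies of order $\delta n^2$. Your closing inequality ``$cn\le O(\delta n^2\log^2 n)$'' is trivially true for every fixed $c,\delta$ once $n$ is large, so it forces nothing. Even if you exploit the additional constraint $|S_j|2^{-2j}\le\|\x\|_2^2\le 1$ (so that $2^{j+\ell}\ge\sqrt{|S_j||T_\ell|}$) together with the trivial bound $\bigl|\mathbf 1_{S_j}^TA\,\mathbf 1_{T_\ell}\bigr|\le|S_j||T_\ell|$, the most you can extract is a pair of sets with discrepancy $\Omega\bigl(c^2n^2/\log^4 n\bigr)$, still $o(n^2)$ and still no contradiction. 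The logarithmic losses are intrinsic to this kind of ``converse mixing'' rounding; the hypothesis that discrepancies are $o(n^2)$ is too weak to recover the eigenvalue bound along this route.

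The repair is to obtain $|\lambda_1(A)|=o(n)$ the way the paper does, via $4$-cycles rather than via rounding: ${\cal Q}$ implies ${\cal P}(4)$ (the Chung--Graham result quoted in the introduction), hence $|\tr(A^4)|=o(n^4)$ by Claim~\ref{claim:walk}, and since all eigenvalues of $A^4$ are non-negative, $|\lambda_1(A)|^4\le\tr(A^4)=o(n^4)$. Feeding this into your bound $|\tr(A^k)|\le|\lambda_1(A)|^{k-2}n^2$ completes the proof, and in fact gives a slightly more direct argument than the paper's own, which instead runs an induction on $k$ via Cauchy--Schwarz in the form $|\tr(A^k)|\le\sqrt{\tr(A^4)}\cdot|\tr(A^{k-2})|$. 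So your architecture is salvageable and even attractive, but the self-contained rounding step, as written, does not close.
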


\begin{proof}
Suppose $T$ satisfies ${\cal Q}$. Then by the result of $\cite{qrtour}$ mentioned earlier, $T$ must also satisfy ${\cal P}(4)$.
From Claim \ref{claim:walk}, we have that
\begin{equation}\label{eq5}
|\tr(A^4)| = \left| \sum^n_{i=1}\lambda^4_i \right|= o(n^4)\;,
\end{equation}
where $\lambda_1,\ldots,\lambda_n$ are the eigenvalues of $A$.
We will now apply induction to show that $|\tr(A^k)| = o(n^k)$ for all even integers $k \geq 4$.
Claim \ref{claim:walk} would then imply that ${\cal P}(k)$ is true for all even integers $k \geq 4$.

Now note the following for an even integer $k > 4$:
$$
|\tr(A^k)| = \left|\sum_i \lambda_i^{k}\right| \leq \sqrt{\sum_{i}\lambda_i^{4} \sum_i \lambda_i^{2k-4}}
\leq  \sqrt{\sum_{i}\lambda_i^{4}} \cdot \left|\sum_{i} \lambda_i^{k-2}\right|
= o(n^k)\;.
$$
The first inequality is Cauchy-Schwarz. For the second inequality, recall that by Claim \ref{claim:eigen} we have that $\lambda^k_i$ are either all non-negative or non-positive.
This means that $(\sum^n_{i=1}\lambda^{k-2}_i)^2 \geq \sum^n_{i=1}\lambda^{2k-4}_i$ since we lose only non-negative terms.
The last equality follows by applying the induction hypothesis and (\ref{eq5}).
\end{proof}

\section{Concluding Remarks}\label{sec:conclude}

\begin{itemize}

\item The proof of Lemma \ref{lem:dir1} shows that if $T$ satisfies the property ${\cal P}(4)$,
then $|\lambda_1(A)| = o(n)$ which in turn implies that $T$ satisfies ${\cal Q}$. Since
we also know that ${\cal Q}$ implies ${\cal P}(4)$ we conclude that a tournament $T$ is quasi-random if and only if
$|\lambda_1(A)| = o(n)$. This is in line with other spectral characterizations
of quasi-randomness for other combinatorial objects \cite{eigenexp, qrregular, butler, qrgraphs, cayley}.

\item Let $k\geq 4$ be an even integer. Now we make an observation about $\ec$
for an arbitrary tournament $T$ (which is not necessarily quasi-random).
The total number of distinct $k$-cycles of $T$ is  $\tr(B^k)$,
where $B$ is the adjacency matrix of the undirected complete graph on $n$ vertices.
Since the spectrum of $B$ is $\{ n-1,-1,\ldots,-1 \}$ we get $\tr(B^k) = (n-1)^{k} + (n-1)$.
For $k \equiv 0 \pmod 4$, by Claim \ref{claim:eigen}, the eigenvalues of $A^k$ are all non-negative
and thus we have
$\tr(A^k) \geq 0$. By Claim \ref{claim:walk}, we have that $\ec \geq ((n-1)^k + (n-1))/2$.
For $k \equiv 2 \pmod 4$, we can conclude similarly using Claims \ref{claim:eigen} and \ref{claim:walk}
that $\ec \leq ((n-1)^k + (n-1))/2$.

\item We note that we can use the ideas we used in this paper to prove similar results for general directed graphs
as defined by Griffiths \cite{qrdigraphs}. Since the ideas required to obtain this more general result do not deviate significantly from those
we have used here, we defer them to the first author's Ph.D. thesis.

\end{itemize}

\noindent \textbf{Acknowledgement:} The first author would like to thank Pushkar Tripathi for helping with
computer simulations.


\begin{thebibliography}{10}

\bibitem{eigenexp}
N. Alon.
\newblock Eigenvalues and expanders.
\newblock {\em Combinatorica}, 6:83--96, 1986.
\newblock 10.1007/BF02579166.

\bibitem{qrregular}
N. Alon, A. Coja-Oghlan, H. H\`{a}n, M. Kang, V. R\"{o}dl, and M. Schacht.
\newblock Quasi-randomness and algorithmic regularity for graphs with general
  degree distributions.
\newblock {\em SIAM J. Comput.}, 39:2336--2362, April 2010.

\bibitem{butler}
S. Butler.
\newblock Relating singular values and discrepancy of weighted directed graphs.
\newblock In {\em Proceedings of the seventeenth annual ACM-SIAM symposium on
  Discrete algorithm}, SODA '06, pages 1112--1116, New York, NY, USA, 2006.
  ACM.

\bibitem{qrsetsystem}
F.~R.~K. Chung and R. L. Graham.
\newblock Quasi-random set systems.
\newblock {\em Journal of The American Mathematical Society}, 4:151--196, 1991.

\bibitem{qrtour}
F.~R.~K. Chung and R.~L. Graham.
\newblock Quasi-random tournaments.
\newblock {\em Journal of Graph Theory}, 15(2):173--198, 1991.

\bibitem{qrhyper}
F. R.~K. Chung and R.~L. Graham.
\newblock Quasi-random hypergraphs.
\newblock {\em Random Structures and Algorithms}, 1:105--124, 1990.

\bibitem{qrgraphs}
F. R.~K. Chung, R.~L. Graham, and R.~M. Wilson.
\newblock Quasi-random graphs.
\newblock {\em Combinatorica}, 9:345--362, 1989.

\bibitem{Go2}
W. T. Gowers, Quasirandomness, counting and regularity for 3-uniform
hypergraphs, Combinatorics, Probability and Computing, 15 (2006),
143-184.

\bibitem{qrgroups}
W.~T. Gowers.
\newblock Quasirandom groups.
\newblock {\em Comb. Probab. Comput.}, 17:363--387, May 2008.

\bibitem{qrdigraphs}
S. Griffiths.
\newblock Quasi-random oriented graphs, 2011.


\bibitem{cayley}
Y.~Kohayakawa, V.~R\"{o}dl, and M.~Schacht.
\newblock Discrepancy and eigenvalues of cayley graphs.
\newblock {\em Eurocomb 2003}, 145.

\bibitem{pseudorandom}
M.~Krivelevich and B.~Sudakov.
\newblock Pseudo-random graphs.
\newblock In {\em More Sets, Graphs and Numbers, Bolyai Society Mathematical
  Studies 15}, pages 199--262. Springer, 2006.


\bibitem{T1}
A. Thomason, Pseudo-random graphs, Proc. of Random Graphs, Pozna\'n
1985, M. Karo\'nski, ed., Annals of Discrete Math. 33 (North Holland
1987), 307-331.

\bibitem{T2}
A. Thomason, Random graphs, strongly regular graphs and
pseudo-random graphs, Surveys in Combinatorics, C. Whitehead, ed.,
LMS Lecture Note Series 123 (1987), 173-195.


\end{thebibliography}

\end{document}